\numberwithin{equation}{section}
\newtheorem{theorem}{Theorem}[section]
\newtheorem*{theorem*}{Theorem}
\newtheorem{conjecture}[theorem]{Conjecture}
\newtheorem{corollary}[theorem]{Corollary}
\theoremstyle{definition}
\newtheorem{remark}[theorem]{Remark}
\newtheorem{example}[theorem]{Example}
\newtheorem{definition}[theorem]{Definition}
\newcommand{\R}{\mathbb{R}}
\newcommand{\Z}{\mathbb{Z}}
\newcommand{\C}{\mathbb{C}}
\newcommand{\cchar}{\mathrm{char}}
\DeclareMathOperator{\Hom}{Hom}
\DeclareMathOperator{\Frac}{Frac}
\DeclareMathOperator{\diag}{diag}
\DeclareMathOperator{\Sym}{Sym}
\begin{document}

\title{Orbit recovery from invariants of low degree in representations of finite groups}

\author{
    \IEEEauthorblockN{Dan Edidin and Josh Katz}
    \thanks{Department of Mathematics,  University of Missouri, MO, USA. 
    The authors were supported by BSF grant no. 2020159 and NSF grant DMS2205626.}
}

\maketitle
 
%\date{}
%\maketitle

\begin{abstract}
Motivated by applications to equivariant neural networks and cryo-electron microscopy we consider the problem of recovering the generic orbit in a representation of a finite group from invariants of low degree. The main result proved here is that invariants of degree at most three separate generic orbits in the regular representation of a finite group defined over any infinite field. This answers a question posed in~\cite{bandeira2023estimation}. We also discuss this problem for subregular representations of the dihedral and symmetric groups.
\end{abstract}

\section{Introduction}

%\textbf{Background.}
Let $V$ be a representation of a group $G$. A classical problem in invariant theory is to give bounds on the number and degrees of generators for the invariant ring $k[V]^G$. 
A more recent problem, introduced 
by Derksen and Kemper~\cite{derksen2015computational}, is to find similar bounds for separating invariants - that is collections of invariants that separates all orbits~\cite{draisma2008polar, domokos2017degree}.
 Motivated by a broad range of applications, 
we consider the related problem of determining lower bounds on the degrees of invariants necessary to separate
{\em generic} orbits in a representation of a compact group~\cite{bandeira2023estimation, edidin2024orbit, blum-smith2024degree}.
This problem has two important applications.
The first is to the problem of constructing efficient equivariant neural networks. 
The second is to the multi-reference alignment (MRA) problem, which arises in cryo-electron miscroscopy.  In both applications it is important to  separate generic orbits with invariants of the lowest possible degree. 
Remarkably, the degrees of invariants needed to separate generic
orbits can be significantly lower than the degrees of invariants
needed to separate all orbits. For example if $G = \Z_p$ with $p$
prime acting by cyclic shift on $\C^p$ then invariants of degree $p$
are needed to separate all orbits~\cite{domokos2017degree}, while it
is known that invariants of degree three separate generic orbits
\cite{bandeira2023estimation}.

Previous papers studied the generic orbit separation problem for band-limited functions in the regular representation $L^2(G)$ when $G$ is a connected
compact Lie group \cite{edidin2024orbit}, and finite abelian groups~\cite{bandeira2023estimation, blum-smith2024degree}. 
The focus of this paper is on representations of non-abelian finite groups.
The main result proved here states that for the regular representation of a finite group defined over an infinite field invariants of degree at most three separate generic orbits. This answers a problem posed in~\cite[Remark 4.3]{bandeira2023estimation} and extends Theorem 4.1 of loc. cit. to arbitrary finite groups. We also discuss results and questions on orbit separation for subregular representations
of dihedral and symmetric groups.

\textbf{Connection to deep learning.}
In machine learning it is desirable to build neural networks that reflect the intrinsic symmetry of the data such as point clouds. If
$G$ is the symmetry group of the data then we want to build the network from $G$-equivariant functions and a basic model of an equivariant neural network
\cite{cohen2016group,lim2022equivariant} is a sequence of maps
\[\R^{n_0} \stackrel{A_1} \to \R^{n_1} \stackrel{\sigma_{b_1}} \to
\R^{n_2} \ldots \stackrel{\sigma_{b_{k-1}}} \to \R^{n_{k-1}}
\stackrel{A_k} \to \R^{n_k}\] where each $\R^{n_i}$ is a
representation of $G$, the $A_i$ are $G$-equivariant linear
transformations and the $\sigma_{b_i}$ are non-linear maps.
A difficulty with this model is that there are in general relatively few $G$-equivariant linear maps between representations, so an equivariant network built this way may not be sufficiently expressive. One approach to circumvent this problem is to use invariant polynomial maps rather than linear maps~\cite{kondor2018clebschgordan, dym2020universality,blumsmith2023machine}. However, because the cost of computing invariants grows exponentially in the degree, it is imperative to classify representations for which invariants of low degree separate almost all orbits.

\textbf{Connection to multi-reference alignment and cryo-EM.}
Given a representation $V$ of a compact group $G$, the muti-reference
alignment (MRA) problem is that of recovering the orbit of
a signal vector 
$x\in V$ from its noisy translates by unknown random group elements
\begin{align} \label{eq.mra}
y_i=g_i \cdot x+\epsilon_i
\end{align}
and the $\epsilon_i$ are taken from a Gaussian distribution $N(0,\sigma^2I)$ which is independent of the group element $g_i$. 
This  model was studied in~\cite{bandeira2014multireference}
for the regular representation of the cyclic group $\Z_n$ and in~\cite{bandeira2020non} it
was proposed as an abstract version of the cryo-EM signal reconstruction problem.  
The MRA problem for the regular representation of $\Z_n$ has been extensively studied in recent years, with both uniform an non-uniform distributions being considered~\cite{perry2019sample, bendory2017bispectrum,abbe2018multireference, bandeira2020optimal}. Other models include the dihedral group with non-uniform distribution~\cite{bendory2022dihedral}
and rotation groups acting on spaces of band-limited functions in $\R^2$
and $\R^3$ 
with uniform distribution~\cite{bandeira2020non,ma2019heterogeneous,
janco2022accelerated,edidin2024orbit}. 

When the signal-to-noise ratio is extremely low, as is the case for cryo-EM measurements, there is no way to estimate the unknown group elements, but it can be shown that the moments of the unknown signal can be accurately approximated~\cite{abbe2018estimation}. When the distribution of random group elements is governed by a uniform distribution the moments are invariant tensors and the MRA problem reduces to the problem of recovering an orbit from its invariants. However, the sample complexity (the minimal number of measurements necessary for accurate approximation) grows
exponentially in the degrees of the moments, so to efficiently solve the MRA problem it is necessary to recover almost all signals from invariants of the lowest possible degree.

\section{Polynomial and unitary invariant tensors} \label{sec.moments}
\begin{definition}[Invariant tensor] \label{def.inv}
  If $V$ is a representation of finite group $G$ defined over a field
  $k$
  %with $\cchar\; k \nmid |G|$ and $x \in V$ then the degree $d$
then the symmetric tensor
%\begin{align}
%M^G_d(x)=\sum_{g\in G}(g \cdot x)^{\bigotimes d-1}\otimes(\overline{g \cdot x})
%\end{align} and 
\begin{align}
  T^G_d(x)=%{1\over{|G|}}
  \sum_{g\in G}(g \cdot x)^{\bigotimes d}
\end{align}
is called the degree $d$ invariant tensor.
\end{definition}
If $\cchar k \nmid |G|$ then $T^G_d(x)/|G|$ is the projection of $x^{\otimes d}$
to $(\Sym^dV)^G$. This definition can be extended to any linearly reductive algebraic group over $k$ with
the Reynolds operator replacing the average over the group.

If $V$ is a complex representation of a finite group $G$ then $V$ is unitary
and we define unitary invariants as follows.
\begin{definition}[The moment tensor] \label{def.moment}
  If $V$ is a complex representation of a finite group then the $d$-th moment tensor of $x \in V$ is
  \begin{align}
    T^G_d(x)=%{1\over{|G|}}
    \sum_{g\in G}(g \cdot x)^{\bigotimes d-1} \otimes \overline{g \cdot x}
\end{align}
\end{definition}
If we divide by $|G|$ then both Definition~\ref{def.inv} and Definition~\ref{def.moment}
can be extended to any unitary representation of a compact group where the sum is replaced by the integral over the group with respect to the Haar measure.

By definition, both the moment and invariant tensors are invariants; i.e. for any $h \in G$,  $T_d^G(h \cdot x) = T_d^G(x)$ and 
$M_d^G(h\cdot x) = M_d^G(x)$. In particular
we may view the $d$-th moment tensor as a map 
$V \to (V^{\otimes d-1} \otimes V^*)^G = \Hom_G(V^{\otimes d-1} ,V)$ and the $d$-th invariant
tensor as a degree-$d$ polynomial map $V \to (V^{\otimes d})^G$.
When the group is compact or linearly reductive, then the coefficients of $T_d^G(x)$ are exactly the 
$G$-invariant polynomials of degree $d$ in $x$. The coefficients of $M_d^G(x)$ are also $G$-invariant functions but are not polynomials due to the presence of the conjugated terms. We will refer to these non-polynomial invariants occurring in the moment tensor as 
{\em unitary invariants}.
\begin{definition} If $V$ is a representation of a finite group (over an arbitrary field) or a compact Lie group then we say that polynomial invariants of degree at most $d$ separate generic
  orbits if there is a non-empty $G$-invariant Zariski open set $U\subset V$ such that for $x \in U$ the orbit of $x$ is uniquely determined by the invariant
  tensors $T^G_1(x), \ldots T^G_d(x)$.
\end{definition}
\begin{remark} As discussed in~\cite{bandeira2023estimation} if the field $k$ is algebraically closed then the condition
  that invariants of degree at most $d$ separate generic orbits is equivalent
  to the condition that $\Frac (k[V]^G) = \Frac k[V^G_{\leq d}]$ where $V_{\leq d}^G \subset k[V]^G$ is the finite dimensional subspace of invariant polynomials of degree at most $d$. The analogous statement holds for complex representations of compact Lie groups. In this case the invariant ring $\C[V]^G$ is the same
  as the invariant ring of the corresponding complex algebraic group $G_\C$. 
\end{remark}  
The following examples illustrate the difference between polynomial and unitary invariants and their ability to separate generic orbits.

\begin{example} \label{ex.goodinvariants}
Let $\mathbb{C}^n$ be the standard representation of the cyclic group $\Z_n$ represented in the Fourier basis. In other words an element
$\ell \in \Z_n$ acts by the rule 
$$\ell \cdot (x_0, \ldots , x_{n-1}) = (x_0, e^{2\pi \iota \ell}x_1,
\ldots e^{2\pi \iota (n-1) \ell}x_{n-1})$$

Because the action is diagonalized all invariants are monomials.
There is a single unitary and polynomial invariant of degree one, namely the function $x_0$. 
The degree two polynomial invariants are $\{x_ix_{n-i}\}$ whereas the degree two unitary invariants are $\{x_i\overline{x_i}\}$. In
signal processing the set of degree two unitary invariants is called the {\em power spectrum}. 
The degree three polynomial invariants are the monomials
$x_i x_j x_{n-i-j}$ where all indices are taken modulo $n$. 
Likewise the degree-three unitary invariants are the monomials
$x_i x_j \overline{x_{i+j}}$. In signal processing the collection of degree-three unitary invariants is called the {\em bispectrum}. 

Note that if $x = (x_0, \ldots , x_n)$ is the Fourier transform of a real vector then $x_i = \overline{x_{n-i}}$ so the polynomial and unitary invariants  agree on these vectors. 

Since $\C^n$ with our chosen action of $\Z_n$ is the regular representation, we know, by Tannaka-Krein duality \cite{smach2008generalized} that
the unitary invariants of degree at most three separate generic orbits. Likewise, \cite[Theorem 4.1 ]{bandeira2023estimation} or our  Theorem~\ref{thm.jennrich} implies that the polynomial invariants of degree at most three also separate generic orbits.
\end{example}

\begin{example} \label{ex.badinvariants}
  Now let let $S^1 = U(1)$ act on $\mathbb{C}^2=\mathbb{C}_1+\mathbb{C}_2$ with weights 1 and 2 i.e. $\theta\cdot x_1=e^{i\theta}x_1$ and $\theta\cdot x_2=e^{2i\theta}x_2$. There are no non-constant polynomial invariants and hence orbit
recovery from these invariants is not possible. However, the unitary invariants $x_1\overline{x_1}, x_2\overline{x_2}, x_1^2\overline{x_2}$ separate any $S^1$ orbits satisfying  $x_1x_2\neq 0$.
\end{example}
\begin{remark}
Note that the representation in Example~\ref{ex.goodinvariants} is defined over the reals, since it is the regular representation of 
a finite group. By contrast, the representation we consider in Example~\ref{ex.badinvariants} is not defined over the reals. A natural question 
is whether for real representations of finite groups 
the separating power of algebraic and unitary invariants on complex vectors is the same. 
\end{remark}

\section{The regular representation of a finite group}\label{sec:reg}
It was established in \cite{smach2008generalized} 
that if $G$ is a compact
group and $V=L^2(G)$ is the regular representation and $f\in L^2(G)$ is a function whose Fourier coefficients are all invertible then the orbit of $f$ is determined
from the bispectrum which encodes the same information as the first three
moment tensors. The proof uses Tannaka-Krein
Duality along with other results in abstract harmonic analysis. 
When $G$ is finite the
question as to whether a similar uniqueness result holds for generic recovery from the first three 
invariant tensors was posed in~\cite[Remark 4.3]{bandeira2023estimation}
and proved for finite abelian groups using Galois theory~\cite[Theorem 4.1]{bandeira2023estimation}.

In~\cite[Theorem 4.1]{bandeira2023estimation}
the authors prove that if $G$ is an arbitrary finite group,  
Jennrich's algorithm for decomposition of real tensors implies that
the $G$-orbit of 
a real valued function $f \colon G \to \R$ can be recovered from the invariants of degree at most three - provided that the orbit of
$f$ consists of linearly independent functions.
Here we use an adaptation of Jennrich's algorithm to tensors over arbitrary
fields and prove
that if $k$ is any infinite field, with no restriction on the characteristic,
then 
polynomial invariants of degree at most three separate generic orbits
in the regular representation, and more generally any representation
where the generic orbit consists of linearly independent vectors.

%By an observation of Satriano
%\cite{bandeira2017estimation}, a representation contains orbits
%consisting of linearly independent
%vectors if and only if it contains a copy of the regular representation.
\begin{theorem} \label{thm.jennrich}
  Let $V$ a representation of a finite group over defined over an infinite field
  $k$
  %$\cchar k \nmid ||G|$.
  then the $G$-orbit of
  any vector $x \in V$ whose orbit consists of linearly independent vectors
  is uniquely determined 
from the invariant tensors $T^G_2(x)$ and $T^G_3(x)$.
\end{theorem}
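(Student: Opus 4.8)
The plan is to recover the orbit constructively from the two tensors, following a field-agnostic version of Jennrich's algorithm and then using the quadratic tensor to fix the remaining scalar ambiguities. Write $v_g = g\cdot x$. By hypothesis the orbit vectors are linearly independent, hence distinct, so (taking the stabilizer trivial, as for the regular representation; otherwise a fixed nonzero multiplicity divides out) we have $T_2^G(x)=\sum_{g\in G} v_g\otimes v_g$ and $T_3^G(x)=\sum_{g\in G} v_g^{\otimes 3}$, with the $v_g$ spanning an $n$-dimensional subspace $V_0\subset V$ where $n=|G|$. First I would recover $V_0$ itself from $T_3^G(x)$ as the image of its mode-$1$ flattening $V^*\otimes V^*\to V$; taking $\phi=\psi=v_h^*$ shows each $v_h$ lies in this image, so the image is exactly $V_0$. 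Thereafter I work intrinsically on $V_0$, regarding $T_2^G(x)\in\Sym^2 V_0$ and $T_3^G(x)\in\Sym^3 V_0$.

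Next I would recover the set of lines $\{k v_g\}$. For $a\in V_0^*$ let $M_a\colon V_0^*\to V_0$ be the contraction of $T_3^G(x)$ against $a$ in its last slot, so $M_a=\sum_g \ip{a}{v_g}\,v_g\otimes v_g$. In the basis $\{v_g\}$ of $V_0$ with dual basis $\{v_g^*\}$ one has $M_a(v_h^*)=\ip{a}{v_h}\,v_h$, so $M_a$ is diagonal with entries $\ip{a}{v_g}$. Because $k$ is infinite I can choose $a,b\in V_0^*$ generically so that every $\ip{b}{v_g}\neq 0$, making $M_b$ invertible, and so that the ratios $\lambda_g=\ip{a}{v_g}/\ip{b}{v_g}$ are pairwise distinct. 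Then $\Phi=M_a\circ M_b^{-1}\colon V_0\to V_0$ satisfies $\Phi(v_g)=\lambda_g v_g$, so $\Phi$ is diagonalizable over $k$ with distinct eigenvalues and eigenlines exactly the $k v_g$. This produces, from $T_3^G(x)$ alone, vectors $u_g=c_g v_g$ with unknown nonzero scalars $c_g\in k$, and the recovered set of eigenlines does not depend on the generic choice of $(a,b)$.

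Then I would fix the scalars using both tensors. Since the $u_g$ are linearly independent, the families $\{u_g^{\otimes 2}\}$ and $\{u_g^{\otimes 3}\}$ are linearly independent in $\Sym^2 V_0$ and $\Sym^3 V_0$ (test against $\phi^{\otimes 2}$ and $\phi^{\otimes 3}$ for suitable dual functionals). Expanding $T_2^G(x)=\sum_g c_g^{-2}\,u_g^{\otimes 2}$ and $T_3^G(x)=\sum_g c_g^{-3}\,u_g^{\otimes 3}$ then determines the coefficients $c_g^{-2}$ and $c_g^{-3}$ uniquely, and their ratio yields $c_g^{-1}=c_g^{-3}/c_g^{-2}$. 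Hence $v_g=c_g^{-1}u_g$ is recovered for every $g$, so the orbit $\{v_g\}$ is reconstructed as a function of $T_2^G(x)$ and $T_3^G(x)$; this is exactly the asserted uniqueness. (Note that $T_3^G(x)$ alone would only determine each $c_g$ up to a cube root of unity in $k$, which is why the second moment is needed.)

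I expect the main obstacle to be the field-agnostic form of the eigendecomposition step. Over $\R$ or $\C$ the diagonalization of $\Phi$ is classical, but over an arbitrary infinite field one must verify that $\Phi$ is genuinely diagonalizable over $k$ — which holds here because its eigenvalues $\lambda_g$ already lie in $k$ and can be forced to be distinct — and that the two genericity requirements on $(a,b)$ cut out a nonempty Zariski-open subset of $V_0^*\times V_0^*$. Both hinge on the linear independence of the orbit vectors together with the infinitude of $k$: distinctness of the $\lambda_g$ fails only on the proper subvariety where some $\ip{a}{\,\ip{b}{v_j}v_i-\ip{b}{v_i}v_j}=0$, and the displayed vector is nonzero precisely because $v_i,v_j$ are linearly independent.
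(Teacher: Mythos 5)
Your proposal is correct and follows essentially the same route as the paper: a field-agnostic Jennrich argument that recovers the lines $k v_g$ as the eigenlines of $M_aM_b^{-1}$ for generic contractions $a,b\in V^*$ (with distinct eigenvalues guaranteed by the linear independence of the orbit and the infinitude of $k$), followed by using the degree-two and degree-three tensors together to fix the unknown scalars. The only cosmetic differences are that the paper extracts the span from $T^G_2(x)$ rather than from a flattening of $T^G_3(x)$, and pins down the scalar ambiguity by symmetrizing a single eigenvector over $G$ and comparing with $T^G_3(x)$ and $T^G_2(x)$, rather than by expanding both tensors in the basis $\{u_g^{\otimes d}\}$ as you do.
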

\begin{proof}
  Enumerate the elements of $G$ as $g_1, \ldots , g_{n}$ where
  $g_1 = e$. If $x \in V$ set $x_i = g_i x$. By assumption on $x$, the
  vectors $x_1, \ldots , x_{n}$ are linearly independent and their span
is a $G$-invariant subspace $W \subset V$ isomorphic to the regular representation. We can recover the subspace $W$ from the 
second invariant tensor  
$$T^G_2(x) = \sum_{g \in G}  gx \otimes gx =\sum_{i=}^{n} x_i \otimes x_i$$
without a priori knowing the vector $x$ as follows. If we choose a basis for $V$ then we can represent
$T^G_2(x)$ as a symmetric matrix with respect to this basis and the range
of this matrix will be the $G$-invariant subspace $W$. (Note that we do not need $k$ to be an infinite field to use this argument.)

%For a generic choice of dual vector $a \in V^*$ the contraction
%$T_2(x)|a = \sum_{i=0} \langle a, x_i \rangle x_i$ is a non-zero vector in the $G$-invariant subsapce
%  $W$ of $V$ spanned by $x_i$. In particular for a generic choice
%  $a_1, \ldots a_n$ the contractions $T^G_2(x)|_{a_1}, \ldots T^G_2(x)|_{a_n}$
%form basis for $W$. Thus, we can determine the $G$-invariant subspace
%$W$ from the second invariant
%  tensor.
  
We now consider the degree-three invariant tensor
$T^G_3(x) = \sum_{i=1}^{n} x_i \otimes x_i \otimes x_i$.
Fix two linearly independent elements $a,b \in V^*$ 
and consider the contractions
$T_a = \sum_{i=1}^{n} \langle a, x_i \rangle x_i \otimes x_i$ and
$T_b = \sum_{i=1}^{n} \langle b, x_i \rangle x_i \otimes x_i$ in $\Sym^2 W
\subset \Sym^2 V$.
Choosing an ordered basis for $W$  we can 
represent $T_a$ and $T_b$ as symmetric $n \times n$-matrices.

Let $D_a=\diag{\langle a,x_i\rangle}$ and $D_b=\diag{\langle
  b,x_i\rangle}$ then $T_a=XD_aX^T$ and $T_b=XD_bX^T$ where
$X=(x_1,\ldots ,x_{n})$ is the matrix whose columns are
$\{x_i\}$. Since the $x_i$ are linearly independent the matrix $X$ is invertible. Moreover, if the field $k$ is infinite  then for almost  all choices
of dual vectors $a,b \in V^*$
$\langle a, x_i \rangle$ and $\langle b, x_i \rangle$ are all non-zero, and the products
$\langle a, x_i \rangle \langle b, x_i \rangle^{-1}$ are distinct elements
of the field $k$. In particular
the matrices $T_a$ and $T_b$ are invertible and 
the product $T_aT_b^{-1}=XD_aD_b^{-1}X^{-1}$, which
can be determined from the third invariant tensor has 
distinct eigenvalues, and hence one-dimensional eigenspaces.
To determine the $x_i$ we can use the following strategy. Let
$u$ be any eigenvector of $T_aT_b^{-1}$. Then $x = c h x$ for some
$h \in G$  and non-zero constant $c \in \C$. Then
$\{ gu\}_{g \in G} = \{ cx_i\}$.
Hence $\sum_{g \in G} gu^{\otimes 3} = c^3 T_3(x)$. Since $T_3(x)$ is
known, we determine $c^3$.
We can also
determine $c^2$ and thus $c$,  by comparing $\sum gu^{\otimes 2}$
with the known invariant tensor $T^G_2(x)$.
\end{proof}

\section{Subregular representations of finite groups}
Theorem~\ref{thm.jennrich} implies that the generic orbit in any representation that contains a copy of the regular representation
can be recovered from the first three moment or invariant tensors.
However, this condition is certainly not necessary,
and we can
ask for
non-trivial examples of subregular
representations where invariants of degree at most three separate
generic orbits.  

\textbf{The dihedral group.}
The dihedral group provides a class of examples of representations properly
contained in the 
regular representation for which invariants of degree at most three
separate generic orbits.

Let $D_{n}$ be the dihedral group of order $2n$ with generators
$r$ of order $n$ and $s$ of order two. Consider the $n$-dimensional
standard representation of $D_{n}$ where the generator $r$ acts by cyclic shifts and the generator $s$ acts by the reflection
$s(x_0, \ldots x_{n-1}) = (x_0, x_{n-1}, x_{n-2}, \ldots x_1)$.

%The authors of this paper proved the following theorem in the preprint~\cite{edidin2024generic}. A revised version of that paper is being prepared for publication.
\begin{theorem} \cite{edidin2024generic} \label{thm.dihedral}
  The first three invariant tensors separate generic complex orbits
  in the standard representation, $\mathbb{C}^n$, of $D_{n}$.
\end{theorem}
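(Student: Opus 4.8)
The plan is to reduce the dihedral problem to the already-understood cyclic case by passing to the Fourier basis for the cyclic subgroup $\langle r\rangle \cong \Z_n$. Since the standard representation is only $n$-dimensional while a generic orbit has $2n$ points, the orbit vectors are linearly dependent and Theorem~\ref{thm.jennrich} does not apply directly. Write $\hat x = (\hat x_0,\dots,\hat x_{n-1})$ for the discrete Fourier transform of $x$, so that $r$ acts by $\hat x_k \mapsto \omega^{k}\hat x_k$ with $\omega$ a primitive $n$-th root of unity, while the reflection $s$ acts by index reversal $\hat x_k \mapsto \hat x_{-k}$ (indices mod $n$). Because $D_n\cdot x = \Z_n\cdot x \cup \Z_n\cdot(sx)$, each invariant tensor splits as $T^{D_n}_d(x) = T^{\Z_n}_d(x) + T^{\Z_n}_d(sx)$. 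Reading off coefficients, the first three invariant tensors determine exactly: the trivial coefficient $\hat x_0$; the power-spectrum products $p_k := \hat x_k\hat x_{-k}$ for all $k$ (each $p_k$ is already $D_n$-invariant, being fixed by $s$); and the symmetrized bispectrum values $\beta_{j,k} := B_{j,k} + B_{-j,-k}$, where $B_{j,k} := \hat x_j\hat x_k\hat x_{-j-k}$ is the cyclic bispectrum. Compared with the cyclic case, the obstruction is that the dihedral invariants record only the $s$-symmetrization $\beta_{j,k}$ rather than $B_{j,k}$ and $B_{-j,-k}$ separately.

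The first key observation is that although only the sum $\beta_{j,k}$ is directly visible, the product of the two cyclic entries is also recoverable: a direct computation gives $B_{j,k}\,B_{-j,-k} = p_j\,p_k\,p_{j+k}$, a quantity already determined by the degree-two invariants. Hence the unordered pair $\{B_{j,k},B_{-j,-k}\}$ is the set of roots of the quadratic $z^2 - \beta_{j,k}\,z + p_j p_k p_{j+k} = 0$, whose coefficients lie in the field generated by the first three dihedral invariant tensors. For generic $x$ the discriminant $(B_{j,k}-B_{-j,-k})^2$ is nonzero, so this quadratic has two distinct roots and we recover the unordered pair $\{B_{j,k}, B_{-j,-k}\}$ for every $(j,k)$.

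It remains to make a globally consistent choice of one entry from each pair. Because $s$ reverses all frequencies simultaneously, the two genuine assignments are $B_{j,k}$ for all $(j,k)$ (corresponding to $x$) and $B_{-j,-k}$ for all $(j,k)$ (corresponding to $sx$); these are precisely the two cyclic bispectra of the two $\Z_n$-orbits comprising $D_n\cdot x$. I would fix the overall choice by selecting either root of the $(1,1)$-quadratic, and then propagate using the cocycle relations $B_{j,k}B_{j+k,\ell}/p_{j+k} = B_{j,\ell}B_{j+\ell,k}/p_{j+\ell}$ together with $\hat x_{-k} = p_k/\hat x_k$, which force the correct root from each subsequent pair. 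Once a consistent cyclic bispectrum $\{B_{1,k}\}_k$ is in hand, the cyclic reconstruction (the $\Z_n$ case of Theorem~\ref{thm.jennrich}, equivalently \cite[Theorem 4.1]{bandeira2023estimation}) recovers the corresponding $\Z_n$-orbit via the marching relation $\hat x_{k+1}/\hat x_k = \hat x_1\,p_{k+1}/B_{1,k}$; the two global choices produce the two $\Z_n$-orbits whose union is $D_n\cdot x$, so the dihedral orbit is determined and the invariants separate it from all orbits not equal to it.

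The main obstacle is the global consistency step: one must show that, for $x$ in a nonempty Zariski-open set, the cocycle relations are satisfied by exactly the two reflection-related assignments and by no mixed assignment. I expect this to follow from a genericity argument in which a mixed choice forces a nontrivial algebraic relation among the $\hat x_k$ that fails off a proper subvariety; the requisite open conditions — all $\hat x_k \neq 0$, distinct quadratic roots, and nondegeneracy of the propagation — cut out a nonempty open set whose nonemptiness can be checked by exhibiting a single vector. One should also separately dispatch the exceptional frequencies $k = 0$ and, for even $n$, $k = n/2$, where the frequency block is one-dimensional; these contribute only lower-degree constraints and are absorbed into the generic stratum.
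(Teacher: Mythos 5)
The paper itself offers no proof of this theorem---it is imported from \cite{edidin2024generic}---so I can only judge your argument on its own terms. Your first step is correct and is the natural reduction: in the Fourier basis the degree $\leq 3$ dihedral invariants give exactly $\hat x_0$, the products $p_k=\hat x_k\hat x_{-k}$, and the sums $\beta_{j,k}=B_{j,k}+B_{-j,-k}$, and since $B_{j,k}B_{-j,-k}=p_jp_kp_{j+k}$ is already determined by degree two, each unordered pair $\{B_{j,k},B_{-j,-k}\}$ is recovered as the root set of an explicit quadratic. The gap is where you say it is---the global consistency step---but it is more serious than a missing genericity lemma: the mechanism you propose cannot close it. If a mixed assignment of roots happens to be the cyclic bispectrum of an \emph{actual} signal $y$, then it automatically satisfies every polynomial identity that bispectra satisfy, including all of your cocycle relations; so no propagation argument based on ``mixed choices violate bispectrum identities'' can exclude it. What has to be shown is that mixed assignments are not \emph{realizable} by signals outside the orbit, which is a dimension/overdeterminedness statement, not a cocycle computation.

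For small $n$ the mixed assignments are realizable and consistency genuinely fails. Take $n=5$ and $x$ with DFT $(\hat x_0,\dots,\hat x_4)=(1,1,1,1,2)$. The essential invariants of degree $\leq 3$ are $\hat x_0$, $p_1=\hat x_1\hat x_4$, $p_2=\hat x_2\hat x_3$, $\sigma_1=\hat x_1^2\hat x_3+\hat x_4^2\hat x_2$, $\sigma_2=\hat x_1\hat x_2^2+\hat x_4\hat x_3^2$, with values $1,2,1,5,3$. The vector $y$ with DFT $(1,2^{1/5},2^{2/5},2^{-2/5},2^{4/5})$ takes the same five values (it realizes the mixed choice $\hat y_1^2\hat y_3=\hat x_1^2\hat x_3=1$ but $\hat y_1\hat y_2^2=\hat x_4\hat x_3^2=2$), yet $|\hat y_1|=2^{1/5}\notin\{1,2\}$, so $y$ lies outside $D_5\cdot x$; the same construction works on a dense open set of $x$. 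Thus for $n=5$ the degree $\leq 3$ invariants determine the generic orbit only up to a list of size two, so your strategy cannot succeed uniformly in $n$, and a correct proof must either treat small $n$ separately or argue that for larger $n$ the system defining a mixed assignment is overdetermined (on the order of $n^2$ independent root choices against only $n$ unknown Fourier coefficients). You should consult \cite{edidin2024generic} for the precise hypotheses and the argument actually used there.
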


\begin{example}[The complete multiplicity-free representation of the dihedral group]
We define the complete multiplicity-free representation $V$ of finite group to be
the sum of all irreducibles taken with multiplicity one. Here we show
that for the complete multiplicity-free representation of $D_n$ generic orbits cannot be separated by invariants of degree at most three. 

The irreducible representations of $D_{n}$ are as follows.
If $n$ is even, then there are $(n/2 -2)$ two-dimensional irreducible
representations $V_1, \ldots V_{(n/2 -2)}$ where the
rotation acts on $V_\ell$ with weights $e^{\pm 2\pi \iota \ell/n}$ and
the reflection exchanges the two eigenspaces for the rotation. There
are four characters, $L_0, L_{-1}, S_0,S_{-1}$. The reflection
acts trivially on $L_0, L_{-1}$ and the rotation acts with weights 1 (i.e.  trivially)  and -1
respectively. On $S_0, S_{-1}$ the reflection acts with weight $-1$ and the rotation acts with weights $1$ and $-1$ respectively. 

If $n$ is odd then there are $(n-1)/2$ two-dimensional representations
$V_1, \ldots , V_{(n-1)/2}$ where the rotation acts on $V_\ell$ with eigenvalues
$e^{\pm 2\pi\iota \ell/n}$ and the reflection exchanges the two eigenspaces.
In addition if $n$ is odd $D_{n}$ has two characters, the trivial
character $L_0$, and the character $S_0$ where the rotation acts trivially
and the reflection acts with weight $-1$. 

The $n$-dimensional standard representation of $D_{n}$ is the sum $L_0 \oplus V_1 + \ldots V_{n/2 -2} \oplus L_{-1}$ if $n$ is even
and $L_0 \oplus V_1 \ldots \oplus V_{(n-1)/2}$ is $n$ is odd.
In particular it is multiplicity free.

 Interestingly we cannot separate the generic orbit with invariants of degree at most three on $V$ even though we can on both the standard representation and the regular representation.
  \begin{corollary} \label{cor.dihedralnegative}
  Let $V$ be the complete multiplicity-free representation of the dihedral
  group $D_{n}$. Then the generic orbit $x \in V$ can only be recovered up to a list of size two for the invariants of degree at most three.
\end{corollary}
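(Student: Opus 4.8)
The plan is to pin the failure of separation on a single sign, namely that of the coordinate dual to the character $S_0$ on which the rotation acts trivially and the reflection by $-1$. Write $V = W \oplus S_0$, where $W = L_0 \oplus \bigoplus_\ell V_\ell$ is the standard representation, and write a generic vector as $x = (w,c)$ with $c$ the $S_0$-coordinate. The heart of the argument is a parity statement: every $G$-invariant polynomial of degree at most three on $V$ is even in $c$.

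To prove this I would grade monomials simultaneously by their rotation weight and by the sign $\pm 1$ with which the reflection $s$ acts. The variable $c$ has rotation weight $0$ and reflection sign $-1$, so a monomial of odd $c$-degree can be $G$-invariant only if its remaining factors form a rotation-weight-zero polynomial in the coordinates of $W$ that is itself anti-invariant under $s$. For total degree at most three this remaining factor has degree at most two, and a direct inspection of the constituents of $W$ shows none exists: the only rotation-weight-zero polynomials of degree $\le 2$ in $W$ are spanned by $a$, $a^2$, and the norms $u_\ell v_\ell$ (here $a$ is the $L_0$-coordinate), each of which is fixed by $s$; the cubic $c^3$ is likewise anti-invariant under $s$. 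Hence no invariant of degree $\le 3$ is odd in $c$, and the only such invariant constraining $c$ at all is the quadratic $c^2$.

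Granting the parity lemma, let $\tau\colon (w,c) \mapsto (w,-c)$. It fixes $T^G_1(x)$, $T^G_2(x)$, $T^G_3(x)$, so $x$ and $\tau x$ are indistinguishable by invariants of degree at most three. They lie in distinct orbits for generic $x$: if $g\,x = \tau x$ with $g \in D_n$, then $g$ fixes the generic standard-representation vector $w$, and since $D_n$ acts faithfully, hence generically freely, on $W$ we get $g = e$ and the contradiction $c = -c$. Thus the generic fibre of the degree-$\le 3$ invariant map contains at least the two orbits $D_n\,x$ and $D_n\,\tau x$.

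Finally I would bound the fibre above by two orbits. If $x'$ has the same invariants of degree $\le 3$ as $x$, then so do their projections to $W$, because every invariant of $W$ is an invariant of $V$; by Theorem~\ref{thm.dihedral} these separate generic orbits of the standard representation, so after translating $x'$ by an element of $D_n$ I may assume $x$ and $x'$ agree on $W$. The invariant $c^2$ then forces $c' = \pm c$, whence $x' \in \{x, \tau x\}$ and the fibre is exactly the list $\{D_n\,x,\ D_n\,\tau x\}$ of size two. The main obstacle is the parity lemma: the whole conclusion rests on the absence of a rotation-weight-zero, $s$-anti-invariant factor of degree at most two in $W$, and verifying this is precisely where the explicit list of irreducible constituents of the standard representation is used.
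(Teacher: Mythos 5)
Your argument is correct for $n$ odd, and there it is essentially the paper's proof: negate the $S_0$-coordinate and observe that no invariant of degree at most three is odd in it, the verification being exactly the weight-zero/anti-invariant inspection you carry out. Your closing step bounding the fibre \emph{above} by two orbits (via Theorem~\ref{thm.dihedral} applied to the projection to the standard representation, plus the invariant $c^2$) is a genuine addition: the paper only exhibits the two indistinguishable orbits and does not argue that the list has size exactly two, so this part of your write-up is stronger than what is printed.

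The gap is that you have silently assumed $n$ is odd. When $n$ is even the complete multiplicity-free representation is not $W\oplus S_0$ but $W\oplus S_0\oplus S_{-1}$, where $W=L_0\oplus V_1\oplus\cdots\oplus V_{n/2-1}\oplus L_{-1}$ now contains the extra character $L_{-1}$ (rotation acts by $-1$, reflection trivially). Your parity lemma is then false as stated: writing $\ell_{-1}$ for the $L_{-1}$-coordinate, the monomial $s_0\,s_{-1}\,\ell_{-1}$ is a degree-three invariant (rotation multiplies it by $1\cdot(-1)\cdot(-1)=1$, the reflection by $(-1)(-1)(1)=1$) which is odd in the $S_0$-coordinate, so the involution $c\mapsto -c$ does \emph{not} preserve all invariants of degree at most three and your two candidate vectors are generically separated by $T_3^G$. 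The paper instead negates $s_0$ and $s_{-1}$ simultaneously, which kills this monomial's objection since it has even total degree in $(s_0,s_{-1})$; but note that even then the required parity statement is more delicate than in the odd case (for instance $s_{-1}(u_ju_k-v_jv_k)$ with $j+k=n/2$ is a degree-three invariant of odd total degree in $(s_0,s_{-1})$, so whichever sign pattern one uses must be checked against all such cross terms). In short: your method is the right one, but the even case requires its own involution and its own parity verification, and as written your proof does not cover it.
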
  

\begin{proof} If $n$ is odd we can write an element of $V$ as
$(x_1, \ldots, x_n, s_0)$ where $(x_1, \ldots, x_n)$ are coordinates for the standard reprsentation and $s_0$ is coordinate for $S_0$.  For a generic choice of
$(x_1, \ldots , x_n)$ and non-zero $s_0$ the vectors
$(x_1,\ldots, x_n, s_0)$ and $(x_1, \ldots, x_n, -s_0)$ lie in different $D_n$-orbits but their invariants of degree three or
less are equal. In order to separate the orbits we need invariants
of degree four. Precisely we need invariants of the form
$s_0p(x_1,\ldots, x_n)$ where $p(x_1,\ldots ,x_n)$ is invariant
under rotations and the reflection acts by $-1$. However lowest degree of such a semi-invariant polynomial
$p(x_1, \ldots , x_n)$ is three.

If $n$ is even we can write an element of $V$
  as $(x_1,\ldots, x_n, s_0,s_{-1})$ and for a generic choice
  of $(x_1, \ldots, x_n)$ and $s_0,s_1$ both non-zero, the vectors $(x_1, \ldots , x_n, s_0,s_{-1})$
  and $(x_1, \ldots , x_n,  -s_0, -s_{-1})$ have the same invariants of degree three or less but do not lie in the same $D_n$ orbit. 
\end{proof}
\end{example}

\textbf{The symmetric group.} 
Deep learning of objects such as sets or graphs which do not come with intrinsic ordering requires the construction of invariants which separate  orbits under the symmetric group~\cite{balan2022permutation,segol2020universal, tabaghi2023universal, dym2022low}. Motivated by these questions, we consider the permutation action of $S_n$ on the space $\mathbb{C}^n\otimes (\mathbb{C}^d)^*=\mathbb{C}^{n\times d}$ of $n\times d$ matrices and pose the problem of determining bounds on the multiplicity $d$ which ensure that a set of low degree invariants
can separate generic orbits.

Recall that $\mathbb{C}^n$ splits as an $S_n$ representation into
$\mathbb{C}^n=V_0+ V$ where $V_0$ is the trivial representation generated by
$(1,\ldots , 1)$ and $V$ is its orthogonal complement which
is the standard $(n-1)$-dimensional representation of $S_n$.
In particular, the space of matrices $(\mathbb{C}^n)^d=V^d+V_0^d$ is not a subrepresentation of the regular representation when $d>1$,
since the trivial representation only occurs with multiplicity one in the regular representation. 
However, we lose no important information by removing the trivial summand $V_0^d$ and working with just $V^d$ which is a subregular representation for $d\leq n-1$. Hence the space of matrices $\mathbb{C}^{n\times d}$ can appropriately be considered as a subregular representation for $d\leq n-1$.

%When $d=1$ $\mathbb{C}[\mathbb{C}^n]^{S_n}$ is a polynomial ring with generators of degree 1 through $n$. Hence if $n > 3$
%we cannot separate generic orbits with invariants of degree at most three.
%However, for $d>1$ the generators of the invariant ring are no longer algebraically independent and it is possible to obtain lower bounds on the degrees of invariants
%  necessary to separate generic orbits.

  The ring of invariants is generated by {\em multisymmetric polynomials} of degrees $1$ through $n$ \cite{schlafli1852resultante,rydh2007generators}
  and when $d > 1$ they are no longer algebraically independent so it is possible for invariants of low degree to separate generic orbits.
The multisymmetric polynomials can be enumerated as follows:
If $\{x_{i,j}\}$ for $1\leq i\leq n, 1\leq j \leq d$ is a dual basis for 
$(\mathbb{C}^{n})^d$
then the sum
$\sum_{\sigma\in S_n, 1\leq i_1,...,i_k\leq d} \sigma(x_{1,i_1}...x_{1,i_k})=x_{1,i_1}...x_{1,i_k}+x_{2,i_1}...x_{2,i_k}+...+x_{n,i_1}...x_{n,i_k}$ is an $S_n$-invariant polynomial of degree $k$ (note that the $i_1,...,i_k$ do not need to be distinct). We refer to invariants of this form as multisymmetric power sum polynomials.

There is a one-to-one correspondence between the power sum multisymmetric polynomials and the monomials of the form $x_{1,i_1}...x_{1,i_k}$ in the variables $\{x_{1,i}\}_{1\leq i\leq d}$. Therefore, the number of distinct power sum multisymmetric polynomials of degree $k$
is $\binom{d+k-1}{k}$.
In particular there are  
$\binom{d+2}{3}+ \binom{d+1}{2} + \binom{d}{1} = \frac{1}{6}(d^3 + 6d^2 + 11d)$ such invariants of degree at most three.

Recall the following definition given in~\cite{bandeira2023estimation}.
\begin{definition}
  Let $S \subset k[V]^G$ be a set of polynomial invariants for the action
  of a finite group.
  We say that $S$ list resolves generic orbits in $V$ if it contains a transcendence basis for $\Frac( k[V]^G)$.
\end{definition}

We give numerical
evidence that, for $d$ sufficiently large, the
invariants of degree at most three list resolve the generic orbit in
$V=\mathbb{C}^{n\times d}$.  A necessary condition for the invariants
of degree at most three to contain a transcendence basis is that
$\frac{1}{6}(d^3 + 6d^2 + 11d) \geq nd$.  Asymptotically this forces
$d$ to be at least $\sim \sqrt{n}$. To produce the table below we generated
all of the multisymmetric power sum polynomials of degree at most
three for different values of $n$ and $d \leq n-1$ and checked whether
or not they contain a transcendence basis using the Jacobian criterion as
developed in~\cite[Proposition 3.25]{bandeira2023estimation} and
implemented in Mathematica.

\begin{table}[h!]
\centering
\begin{tabular}{|c|c|c|}
\hline
\textbf{Group} & \textbf{Representation} & \parbox[c]{4.5cm}{\centering \textbf{Invariants of degree at most 3}\\ \textbf{contains a transcendence basis?}} \\ \hline
\(S_4\) & $\mathbb{C}^{4}$ & No \\
\hline
\(S_4\) & $\mathbb{C}^{4\times 2}$ & Yes \\ \hline
\(S_5\) & $\mathbb{C}^{5}$ & No \\ \hline
\(S_5\) & $\mathbb{C}^{5\times 2}$ & No \\ \hline
\(S_5\) & $\mathbb{C}^{5\times 3}$ & Yes \\ \hline
\(S_6\) & $\mathbb{C}^{6}$ & No \\ \hline
\(S_6\) & $\mathbb{C}^{6\times 2}$ & No \\ \hline
\(S_6\) & $\mathbb{C}^{6\times 3}$ & Yes \\ \hline
\end{tabular}

\caption{Invariants and transcendence bases for symmetric groups}
\label{table:degree3invariants}
\end{table}

These computations lead us to make the following conjecture.

\begin{conjecture}
The invariants of degree at most three list resolve the generic orbit in $(\mathbb{C}^n)^d$ if and only if\\ $\frac{1}{6}(d^3 + 6d^2 + 11d) \geq nd$.
\end{conjecture}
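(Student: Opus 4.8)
The plan is to translate the statement into a Jacobian/interpolation problem. The \textbf{only if} direction is a pure dimension count: since $S_n$ is finite, a generic orbit in $V=(\C^n)^d$ is finite, so $\trdeg\Frac(\C[V]^{S_n})=\dim V=nd$. A family of $M$ invariants can contain a transcendence basis only when $M\ge nd$, and here $M=\tfrac16(d^3+6d^2+11d)$ is the number of power-sum multisymmetric polynomials of degree at most three; this is exactly the asserted necessary condition.

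For the \textbf{if} direction I would work in characteristic zero and use the Jacobian criterion: the transcendence degree of the subfield generated by the degree-$\le 3$ power sums equals the generic rank of their Jacobian, so these invariants list resolve generic orbits precisely when the differential of
\[ \Phi\colon(\C^d)^n\to\C^M,\qquad (\xi_1,\dots,\xi_n)\mapsto\sum_{i=1}^n\bigl(\xi_i^\alpha\bigr)_{1\le|\alpha|\le3}, \]
has rank $nd$ at a generic point; here the $\xi_i\in\C^d$ are the rows of the matrix. Writing $F=(\xi^\alpha)_{1\le|\alpha|\le3}$ and $W=F(\C^d)$, the image of $\Phi$ is the $n$-fold sumset $\{w_1+\cdots+w_n\}$, and an elementary computation (the affine analogue of Terracini's lemma) identifies the generic rank with $\dim\sum_{i=1}^n T_i$, where $T_i=dF_{\xi_i}(\C^d)$ has dimension $d$ because $F$ carries the degree-one coordinates. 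Thus it suffices to prove that for generic $\xi_i$ the tangent spaces are as independent as possible, $\dim\sum_iT_i=\min(nd,M)$.

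The decisive reduction is to dualize. A short calculation shows that $d\Phi$ is injective (equivalently $\rank d\Phi=nd$ when $nd\le M$) if and only if the gradient-evaluation map
\[ \C[s_1,\dots,s_d]_{\le3}\longrightarrow\bigoplus_{i=1}^n\C^d,\qquad f\mapsto\bigl(\nabla f(\xi_i)\bigr)_{i=1}^n \]
is surjective; constants have vanishing gradient, so the image only sees the $M$ forms of degrees $1,2,3$. Hence the whole conjecture reduces to the interpolation statement: \emph{for $n$ generic points $\xi_1,\dots,\xi_n\in\C^d$ the conditions $\nabla f(\xi_i)=0$ are linearly independent on polynomials of degree at most three as soon as $nd\le M$.} Homogenizing $f$ to a cubic form $F(s_0,\dots,s_d)$, these are the conditions that $[1:\xi_i]$ be a critical point: one imposes vanishing of the $d$ affine partials $\partial_{s_1}F,\dots,\partial_{s_d}F$ but not of $\partial_{s_0}F$ (equivalently, not of $F$ itself, by Euler). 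The base case $d=1$ is a $3\times n$ Vandermonde system, independent exactly when $n\le3=M$, which I would use to anchor an induction.

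The hard part is that this is \emph{not} the double-point scheme of Alexander--Hirschowitz: a genuine double point at $[1:\xi_i]$ imposes $d+1$ conditions on cubics (all first partials), whereas our affine scheme imposes only $d$, so one cannot simply quote that theorem. The two thresholds even differ, and the genuinely new regime is $\tfrac{(d+2)(d+3)}6<n\le\tfrac{d^2+6d+11}6$, where the associated homogeneous secant $\sigma_n(\nu_3(\mathbb{P}^d))$ already fills its ambient space, so the homogeneous count gives no information. I would attack the interpolation statement by the differential Horace method---specializing the $\xi_i$ onto a hyperplane and splitting the degree-$\le3$ forms into a trace on the hyperplane and a residual---running a double induction on $d$ and $n$. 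The most delicate point to verify is the interaction with the unique cubic Alexander--Hirschowitz exception $(d,n)=(4,7)$, which lies inside the regime $M\ge nd$ (here $34\ge28$) and is precisely the balanced case $n(d+1)=M+1$; one must check that the extra freedom in the affine (gradient-only) conditions repairs the classical defect, so that $\dim\sum_iT_i$ still attains $nd$ there and no exception to the stated equivalence survives.
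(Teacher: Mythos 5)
First, note that the statement you are proving is a \emph{conjecture}: the paper contains no proof of it. What the paper does supply is (a) the counting argument showing that $\frac{1}{6}(d^3+6d^2+11d)\geq nd$ is necessary, since $\trdeg \Frac(\C[V]^{S_n})=\dim V=nd$ for the finite group $S_n$ and there are only $\binom{d+2}{3}+\binom{d+1}{2}+\binom{d}{1}$ power sum multisymmetric polynomials of degree at most three, and (b) finite numerical evidence (Table~\ref{table:degree3invariants}), obtained by checking generic Jacobian rank via the criterion of~\cite[Proposition 3.25]{bandeira2023estimation} for $n\leq 6$. Your \textbf{only if} direction coincides with the paper's counting argument and is correct. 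Your reduction of the \textbf{if} direction is also sound and is a genuine conceptual advance over what the paper records: passing to the summation map $\Phi$, identifying its generic rank with $\dim\sum_i T_i$ by the affine Terracini argument (valid in characteristic zero), and dualizing to the statement that the gradient-evaluation conditions $\nabla f(\xi_i)=0$ on $\C[s_1,\dots,s_d]_{\leq 3}$ are independent at generic points is exactly the right reformulation of the Jacobian computation the paper performs numerically, and restricting attention to power sums loses nothing since the remaining degree-$\leq 3$ invariants lie in the algebra they generate.

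The genuine gap is that the resulting interpolation statement is never proved: everything after the reduction is a plan (``I would attack\dots by the differential Horace method''), and that plan is precisely where all of the difficulty of the conjecture lives. You correctly observe that the scheme imposing only the $d$ affine partials is not the Alexander--Hirschowitz double-point scheme, so the classical theorem cannot be quoted; but then the trace/residual exact sequences for this non-classical scheme, the base cases of the double induction in $n$ and $d$, and the exclusion of sporadic defective pairs are all left unverified. In particular the balanced case $(d,n)=(4,7)$, which you flag as the delicate interaction with the unique cubic Alexander--Hirschowitz exception, is exactly the kind of case that could make the ``if'' direction false as stated, and you neither resolve it nor give an argument that no other defective pairs occur in the new regime $\frac{(d+2)(d+3)}{6}<n\leq\frac{d^2+6d+11}{6}$ where the homogeneous secant count is uninformative. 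As it stands, your proposal establishes the necessity direction and an equivalent reformulation of the sufficiency direction, but the conjecture itself remains open --- which is consistent with the paper, where it is posed on the strength of computations for $S_4$, $S_5$, $S_6$ only.
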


\bibliographystyle{plain}

\end{document}